\setlist[enumerate,1]{label=\alph*),ref=\theenumi}
\newtheorem{lemma}{Lemma}[section]
\newtheorem{proposition}[lemma]{Proposition}
\newtheorem{theorem}[lemma]{Theorem}
\newcommand{\R}{\mathbb{R}}                             
\newcommand{\toweak}{\rightharpoonup}                   
\newcommand{\Lcal}{\mathcal{L}}                         
\newcommand{\Jcal}{\mathcal{J}}                         
\newcommand{\embed}{\hookrightarrow}
\newcommand{\DefaultGfunction}{G}                       
\DeclareMathOperator{\LspaceSymbol}{\mathbf{L}} 
\newcommand{\Lpspace}[1][p]{\LspaceSymbol^{{#1}}}
\newcommand{\LGspace}[1][\DefaultGfunction]{\LspaceSymbol^{{#1}}}
\newcommand{\LGastspace}[1][\DefaultGfunction]{\LspaceSymbol^{{#1^\star}}}
\DeclareMathOperator{\WspaceSymbol}{\mathbf{W}} 
\newcommand{\Wspace}[1]{\WspaceSymbol^{#1}}
\newcommand{\WLGspace}[1][\DefaultGfunction] { {\WspaceSymbol^1}\LspaceSymbol^{{#1}} }
\newcommand{\WLGzspace}[1][\DefaultGfunction]{ {\WspaceSymbol^1_0}\LspaceSymbol^{{#1}}}
\newcommand{\norm}[1]{\|#1\|}                           
\newcommand{\inner}[2]{\left\langle #1,#2\right\rangle} 
\newcommand{\Lpnorm}[2][p]{\norm{#2}_{\Lpspace[{#1}]}}
\newcommand{\LGnorm}[2][\DefaultGfunction]{\norm{#2}_{\LGspace[{#1}]}}
\newcommand{\LGastnorm}[2][{\DefaultGfunction}]{\norm{#2}_{\LGastspace[{#1}]}}
\newcommand{\WLGnorm}[2][\DefaultGfunction]{\norm{#2}_{\WLGspace[{#1}]}}
\newcommand{\WLGznorm}[2][\DefaultGfunction]{\norm{#2}_{\WLGzspace[{#1}]}}
\begin{document}


\title[Mountain pass solutions with general anisotropic operator]{Mountain pass solutions to Euler-Lagrange equations with general anisotropic operator}

\begin{abstract}
Using the Mountain Pass Theorem we show that the problem
\begin{equation*}
\begin{cases}
\frac{d}{dt}\Lcal_v(t,u(t),\dot u(t))=\Lcal_x(t,u(t),\dot u(t))\quad \text{ for a.e. }t\in[a,b]\\
u(a)=u(b)=0
\end{cases}
\end{equation*}
has a solution in anisotropic Orlicz-Sobolev space. We consider Lagrangian $\Lcal=F(t,x,v)+V(t,x)+\langle f(t), x\rangle$ with growth condition determined by anisotropic G-function and some geometric condition of Ambrosetti-Rabinowitz type.
\end{abstract}

\author{M. Chmara}

\author{J. Maksymiuk}

\address{
Department of Technical Physics and Applied Mathematics,
Gda\'{n}sk University of Technology,
Narutowicza 11/12, 80-233 Gda\'{n}sk, Poland
}
\email{magdalena.chmara@pg.edu.pl, jakub.maksymiuk@pg.edu.pl}

\keywords{
    anisotropic Orlicz-Sobolev space,
    Euler-Lagrange equations,
    Mountain Pass Theorem,
    Palais-Smale condition
}
\subjclass[2010]{46E30 ,  46E40}

\maketitle


\section{Introduction}

We consider the  second order boundary value problem:
\begin{equation}
\tag{ELT}
\label{eq:ELT}
\begin{cases}
\frac{d}{dt}\Lcal_v(t,u(t),\dot u(t))=\Lcal_x(t,u(t),\dot u(t))\quad \text{ for a.e. }t\in [a,b]\\
u(a)=u(b)=0
\end{cases}
\end{equation}
where $\Lcal\colon [a,b]\times\R^N\times\R^N\to \R$ is given by
\begin{equation*}
\Lcal(t,x,v)=F(t,x,v)+V(t,x)+\inner{f(t)}{x}.
\end{equation*}
Using the Mountain Pass Theorem we show that the problem \eqref{eq:ELT} has a solution in anisotropic Orlicz-Sobolev space.

Recently, existence periodic solution to the equation
\[
\frac{d}{dt}\nabla G(\dot u(t))=\nabla V(t,u)+f(t)
\]
was established by Authors in \cite{ChmMak19} via Mountain Pass Theorem. In this paper we consider more general differential operator
\[
\frac{d}{dt} F_v(t,u,\dot{u})
\]
We assume that $F$ is convex in the last variable and that the growth of $F$ and its derivatives is determined by underlying G-function. We also assume that $F$ and $V$ satisfies some geometric conditions of Ambrosetti-Rabinowitz type.

If $F(v)=\frac1p|v|^p$ then the equation \eqref{eq:ELT} reduces to $p$-laplacian equation
$\frac{d}{dt}(|\dot{u}|^{p-2}\,\dot{u})=\nabla V(t,u)+f(t)$. One can also consider more general case $F(v)=\phi(|v|)$, where $\phi$ is convex and nonnegative. In the above cases $F$ does not depend on $v$ directly but rather on its norm $|v|$ and the growth of $F$ is the same in all directions, i.e. $F$ has isotropic growth. Equation \eqref{eq:ELT} with Lagrangian $L(t,x,v)=\frac{1}{p}|v|^p+V(t,x)+\inner{f(t)}{x}$ has been studied by many authors under different conditions, for example in \cite{MawWil89,IzyJan05,Dao16}.

The novelty of this article lies in fact that $F$ can be depended not only on $\dot{u}$ but also on $t$ and $u$. More over we consider anisotropic case, i.e. $F(t,x,\cdot)$ depends on all components of $v$ not only on $|v|$ and has different growth in different directions, which generalizes previous results, for example \cite{ChmMak19}, where kinetic part was given by an anisotropic G-function.

We obtain solution to the problem \eqref{eq:ELT} by applying the Mountain Pass Theorem. To do this we first need to show that corresponding action functional satisfies the Palais-Smale condition. First we prove that a Palais-Smale sequence $\{u_n\}$ is bounded, the proof is rather standard and involves Ambrosetti-Rabinowitz condition. Then we need to show that $\{u_n\}$ has convergent sequence. We show that
\[
\lim_{n\to \infty} \int_I \inner{F_v(t,u_n,\dot{u}_n}{\dot{u}-\dot{u}_n}\,dt = 0,
\]
where $u$ is a weak limit of $\{u_n\}$, which in turn implies that
\[
\lim_{n\to \infty} \int_I F(t,u_n,\dot{u}_n)\,dt = \int_I F(t,u,\dot{u})\,dt.
\]
The proof of this fact is based on convexity of $F$ and embedding $\WLGspace \embed \Lpspace[\infty]$.
Next, using convexity of $F$ and condition $F(t,x,v)\geq \Lambda G(v)$, that $\{\dot{u}_n\}$ converges strongly. This reasoning shows that action functional satisfies so called $(S_+)$ condition (see for example \cite{DraMil07}).

This result seems to be of independent interest and the methods presented in this paper can be also applied in other problems (e.g. in the case of periodic problem).

Our work was partially inspired by the work of de Napoli and Mariani \cite{DeNapMar03}. They consider elliptic PDE
\[
-div (a(x,\nabla u))=f(x,u)
\]
with Dirichlet conditions. To show that corresponding functional satisfies the Palais-Smale condition they also prove that $(S_+)$ condition is satisfied. However, they use stronger condition, namely they assume uniform convexity of functional.

As in \cite{ChmMak19} we consider two cases: $G$ satisfying $\Delta_2$, $\nabla_2$ at infinity and globally. It turns out, that in both cases the mountain pass geometry of action functional is strongly depended on two factors: the embedding constant for $\WLGspace \embed \Lpspace[\infty]$ and on Simonenko indices $p_G$ and $q_G$ (see Lemmas \ref{lem:J(e)<0} and \ref{lem:J>0}).

Similar observation can be found in \cite{Cle04, Bar17, Cle00} where the existence of elliptic systems via the  Mountain Pass Theorem is considered. In \cite{Bar17} authors deal with an anisotropic problem. The isotropic case is considered in \cite{Cle04, Cle00}.

\section{ Orlicz-Sobolev spaces }

In this section we briefly recall the notion of anisotropic Orlicz-Sobolev spaces. For more details we refer the reader to \cite{ChmMak17, ChmMak19} and references therein. We assume that
\begin{enumerate}[ref=(G),label=(G)]
\item \label{asm:G}
$G\colon \R^N\to [0,\infty)$ is a continuously differentiable G-function (i.e. $G$ is convex, even, $G(0)=0$ and $G(x)/|x|\to \infty$ as $|x|\to \infty$ ) satisfying $\Delta_2$ and $\nabla_2$ conditions (at infinity).
\end{enumerate}

Typical examples of such $G$ are: $G(x)=|x|^p$, $G(x_1,x_2)=|x_1|^{p_1} + |x_2|^{p_2}$ and $G(x)=|x|^p \log(1+|x|)$, $1<p_i<\infty$, $1<p<\infty$.

Let $I=[a,b]$. The Orlicz space associated with $G$ is defined to be
\begin{equation*}
\LGspace=\LGspace(I,\R^N) = \left\{u\colon I\to \R^N\colon \int_I G(u)\,dt < \infty  \right\}.
\end{equation*}
The space $\LGspace$ equipped with the Luxemburg norm
\begin{equation*}
\LGnorm{u} = \inf\left\{\lambda>0\colon \int_I G\left(\frac{u}{\lambda}\right)\,dt\leq 1  \right\}
\end{equation*}
is a separable, reflexive Banach space. We have two important inequalities:
\begin{enumerate}
\item the Fenchel inequality
\begin{equation*}
\inner{u}{v} \leq G(u)+G^\ast(v), \text{ for every $u$, $v\in \R^N$,}
\end{equation*}

\item the H\"older inequality
\begin{equation*}
\int_I \inner{u}{v}\,dt \leq 2\LGnorm{u} \LGastnorm{v}, \text{ for every $u\in \LGspace$ and $v\in \LGastspace$,}
\end{equation*}
\end{enumerate}
where $\DefaultGfunction^\ast$ is a convex conjugate of $G$. Functional $R_G(u)=\int_I G(u)\,dt$ is called modular. Note that if $G(x)=|x|^p$ then $\LGspace=\Lpspace$ and $R_G(u)=\Lpnorm{u}^p$. In general case, relation between modular and the Luxemburg norm is more complicated.

Define the Simonenko indices for G-function
\[
p_G=\inf_{|x|>0}\frac{\inner{x}{\nabla G(x)}}{G(x)},
\quad
q_G=\sup_{|x|>0}\frac{\inner{x}{\nabla G(x)}}{G(x)},
\]
It is obvious that $p_G\leq  q_G$. Moreover, since $G$ satisfies $\Delta_2$ and $\nabla_2$, $1<p_G$ and $q_G<\infty$. If $G(x)=\tfrac{1}{p}|x|^p$ then $p_G=q_G=p$. The following results are crucial to Lemma \ref{lem:J>0}
\begin{proposition}
\label{prop:modular_greater_than_norm_globally}
Assume that $G$ satisfies $\Delta_2$ and $\nabla_2$ globally.
\begin{enumerate}
\item If $\LGnorm{u}\leq 1,$ then $\LGnorm{u}^{q_G}\leq R_G(u).$
\item If $\LGnorm{u}>1,$ then $\LGnorm{u}^{p_G}\leq R_G(u).$
\end{enumerate}
\end{proposition}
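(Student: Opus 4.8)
The plan is to convert the definition of the Simonenko indices into pointwise scaling estimates for $G$, integrate these over $I$ to obtain modular scaling estimates, and finally normalize by the Luxemburg norm. Throughout, global $\Delta_2$ and $\nabla_2$ will be used to guarantee $1<p_G\le q_G<\infty$, that $G(x)>0$ for $x\ne0$, and that the modular is finite and continuous under dilations.

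First I would record that, by the definitions of $p_G$ and $q_G$, for $x\in\R^N\setminus\{0\}$ and $t>0$,
\[
p_G\, G(tx)\le\inner{tx}{\nabla G(tx)}\le q_G\, G(tx).
\]
Since $G$ is $C^1$ with $G(tx)>0$ and $\frac{d}{dt}G(tx)=\frac1t\inner{tx}{\nabla G(tx)}$, this says $\frac{p_G}{t}\le\frac{d}{dt}\log G(tx)\le\frac{q_G}{t}$. Integrating over $[s,1]$ when $0<s\le1$ and over $[1,s]$ when $s\ge1$ yields the pointwise estimates
\[
G(sx)\ge s^{q_G}G(x)\ \ (0<s\le1),\qquad G(sx)\ge s^{p_G}G(x)\ \ (s\ge1),
\]
which hold trivially at $x=0$ as well. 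Integrating over $t\in I$ then gives, for $u\in\LGspace$, the modular estimates $R_G(su)\ge s^{q_G}R_G(u)$ for $0<s\le1$ and $R_G(su)\ge s^{p_G}R_G(u)$ for $s\ge1$.

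Next I would use the normalization property of the Luxemburg norm: because $G$ satisfies $\Delta_2$ globally, for $u\in\LGspace$ with $u\ne0$ the map $\lambda\mapsto R_G(u/\lambda)$ is finite and continuous on $(0,\infty)$, strictly decreasing on the set where it is positive (since $s\mapsto G(sx)$ is strictly increasing for $x\ne0$), and tends to $0$ as $\lambda\to\infty$ by dominated convergence; hence $R_G(u/\LGnorm{u})=1$. Writing $u=\LGnorm{u}\,v$ with $v=u/\LGnorm{u}$, so that $R_G(v)=1$, the two modular estimates give $R_G(u)\ge\LGnorm{u}^{q_G}R_G(v)=\LGnorm{u}^{q_G}$ when $\LGnorm{u}\le1$, which is (a), and $R_G(u)\ge\LGnorm{u}^{p_G}R_G(v)=\LGnorm{u}^{p_G}$ when $\LGnorm{u}>1$, which is (b); the case $u=0$ is trivial.

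The main obstacle — the only genuinely nontrivial point — is the identity $R_G(u/\LGnorm{u})=1$. This is precisely where \emph{global} $\Delta_2$ and $\nabla_2$ are needed rather than the at-infinity versions of \ref{asm:G}: they are what make $\lambda\mapsto R_G(u/\lambda)$ finite and continuous for every $\lambda>0$, so that the infimum defining $\LGnorm{u}$ is attained with modular value exactly $1$, and what make the exponents $p_G,q_G$ finite and $>1$. The remaining steps reduce to a one-variable differential inequality followed by monotone integration, hence are routine.
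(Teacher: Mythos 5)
Your argument is correct: the pointwise scaling inequalities $G(sx)\ge s^{q_G}G(x)$ for $0<s\le 1$ and $G(sx)\ge s^{p_G}G(x)$ for $s\ge 1$ obtained by integrating $\frac{d}{dt}\log G(tx)\in[p_G/t,\,q_G/t]$, combined with the unit-modular identity $R_G(u/\LGnorm{u})=1$ (valid under global $\Delta_2$), give exactly the two stated bounds. The paper does not prove the proposition in-line but defers to \cite[Appendix A]{ChmMak19}, and your proof is the standard argument used there, so this matches the intended route.
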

The proof can be found in \cite[Appendix A]{ChmMak19}. More information about indices for isotropic case can be found in \cite{Mal89,Cle04}. In case $G$ satisfies $\Delta_2$ and $\nabla_2$ only at infinity we have weaker estimates

\begin{proposition}
\label{prop:modular_greater_than_norm}
If $\LGnorm{u}>1$ then $R_G(u) \geq \LGnorm{u}$. If $\LGnorm{u}\leq 1$ then $R_G(u) \leq \LGnorm{u}$.
\end{proposition}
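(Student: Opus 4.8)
The plan is to argue directly from the definition of the Luxemburg norm, using only two elementary consequences of convexity: the map $\mu\mapsto \int_I G(u/\mu)\,dt$ is non-increasing on $(0,\infty)$, and $G(tx)\le t\,G(x)$ for every $t\in[0,1]$ and $x\in\R^N$ (immediate from $G$ convex with $G(0)=0$), while $G(sx)$ is non-decreasing in $s\ge 0$. I would first record an auxiliary fact: for $u\in\LGspace$ with $u\neq 0$ the infimum defining $\LGnorm{u}$ is attained, i.e. writing $\lambda=\LGnorm{u}$ one has $\int_I G(u/\lambda)\,dt\le 1$. Indeed, taking $\mu_n\downarrow\lambda$ with $\int_I G(u/\mu_n)\,dt\le 1$, monotonicity of $G$ along rays gives $G(u(t)/\mu_n)\uparrow G(u(t)/\lambda)$ for a.e.\ $t$, and monotone convergence yields $\int_I G(u/\lambda)\,dt\le 1$ (the limit being finite since $u\in\LGspace$ forces $R_G(u)<\infty$ on the finite interval $I$).

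For the first assertion, assume $\lambda:=\LGnorm{u}>1$. For any $\mu$ with $1\le\mu<\lambda$, the definition of the infimum forces $\int_I G(u/\mu)\,dt>1$; on the other hand $1/\mu\le 1$, so pointwise $G(u/\mu)\le\tfrac1\mu G(u)$, hence $1<\tfrac1\mu R_G(u)$, i.e.\ $R_G(u)>\mu$. Letting $\mu\to\lambda^-$ gives $R_G(u)\ge\lambda=\LGnorm{u}$.

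For the second assertion, assume $\lambda:=\LGnorm{u}\le 1$. The case $u=0$ is trivial, so take $0<\lambda\le 1$. By the auxiliary fact $\int_I G(u/\lambda)\,dt\le 1$, and since $\lambda\in(0,1]$ we may apply $G(u)=G(\lambda\cdot u/\lambda)\le\lambda\,G(u/\lambda)$ pointwise; integrating gives $R_G(u)\le\lambda\int_I G(u/\lambda)\,dt\le\lambda=\LGnorm{u}$.

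The only genuine subtlety is the attainment of the infimum, which is what makes the boundary case $\LGnorm{u}=1$ in the second part work cleanly; everything else is a one-line convexity estimate. I do not expect to need the $\Delta_2$/$\nabla_2$ hypotheses here beyond their role in ensuring that $\LGspace$ is a vector space on which $R_G$ is finite, since the inequalities themselves hold for an arbitrary G-function over the finite-measure interval $I$.
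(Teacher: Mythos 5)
Your proof is correct. Note that the paper itself gives no proof of this proposition: it is stated as a known fact, with the reader referred to \cite[Examples 3.8 and 3.9]{ChmMak17} for the modular--norm relations, so there is no argument in the paper to compare against. Your self-contained derivation is the standard one and is complete: the two convexity facts $G(tx)\leq t\,G(x)$ for $t\in[0,1]$ and monotonicity of $G$ along rays are exactly what is needed, and you correctly isolate the one delicate point, namely that $\int_I G(u/\LGnorm{u})\,dt\leq 1$ (attainment of the infimum via monotone convergence), which is what makes the boundary case $\LGnorm{u}=1$ of the second assertion work. Your closing remark is also accurate: unlike Proposition \ref{prop:modular_greater_than_norm_globally}, whose exponent estimates genuinely use $\Delta_2$ and $\nabla_2$ through the Simonenko indices, these two inequalities hold for an arbitrary G-function, which is consistent with the paper invoking this proposition precisely in the case where $\Delta_2$ and $\nabla_2$ are only assumed at infinity.
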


For relations between Luxemburg norm and modular for anisotropic spaces we refer the reader to \cite[Examples 3.8 and 3.9]{ChmMak17}. We will also use the following simple observations

\begin{lemma}
\label{lem:coercivityofmodular}
\begin{equation*}
\lim_{\LGnorm{u}\to \infty}\frac{R_G(u)}{\LGnorm{u}} = \infty.
\end{equation*}
\end{lemma}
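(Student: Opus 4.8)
The plan is to upgrade the lower modular bound $R_G(u)\ge\LGnorm{u}$, valid for $\LGnorm{u}>1$ by Proposition~\ref{prop:modular_greater_than_norm}, by exploiting the strict superlinearity of $G$ recorded in the Simonenko index $p_G$, which satisfies $p_G>1$. Convexity of $G$ alone only reproduces $R_G(u)\ge\LGnorm{u}$ (since $G(\lambda x)\ge\lambda G(x)$ is an equality for linear $G$), so the inequality $p_G>1$ — equivalently the $\nabla_2$ hypothesis on $G$ — is precisely the ingredient that forces the quotient to blow up. This is the only place any genuine input is needed; everything else is a scaling computation, and, pleasantly, the argument is insensitive to concentration phenomena because the superlinearity estimate below holds uniformly in $x$.

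First I would record the homogeneity-type estimate
\[
G(\lambda x)\ge \lambda^{p_G}\,G(x)\qquad\text{for all }\lambda\ge 1,\ x\in\R^N .
\]
For $x\neq 0$ put $g(\lambda)=G(\lambda x)$; then $g'(\lambda)=\tfrac1\lambda\inner{\lambda x}{\nabla G(\lambda x)}\ge\tfrac{p_G}{\lambda}\,g(\lambda)$ by the definition of $p_G$, hence $\tfrac{d}{d\lambda}\log g(\lambda)\ge p_G/\lambda$, and integrating from $1$ to $\lambda$ gives $g(\lambda)\ge\lambda^{p_G}g(1)$; the case $x=0$ is trivial.

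Now fix $u\in\LGspace$ with $\LGnorm{u}>1$ and set $s=\LGnorm{u}^{1/2}$, so that $1<s<\LGnorm{u}$ and $\LGnorm{u/s}=\LGnorm{u}/s=s>1$. Proposition~\ref{prop:modular_greater_than_norm} then gives $R_G(u/s)\ge\LGnorm{u/s}=s$. Applying the estimate above pointwise with $\lambda=s$ and $x=u(t)/s$, and integrating over $I$,
\[
R_G(u)=\int_I G\!\left(s\cdot\tfrac{u}{s}\right)dt\ \ge\ s^{p_G}\int_I G\!\left(\tfrac{u}{s}\right)dt = s^{p_G}R_G(u/s)\ \ge\ s^{\,p_G+1}=\LGnorm{u}^{(p_G+1)/2}.
\]
Hence $R_G(u)/\LGnorm{u}\ge\LGnorm{u}^{(p_G-1)/2}$ whenever $\LGnorm{u}>1$, and since $p_G>1$ the exponent $(p_G-1)/2$ is positive. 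Given any $M>0$, it therefore suffices to take $\LGnorm{u}>\max\{1,M^{2/(p_G-1)}\}$ to obtain $R_G(u)/\LGnorm{u}>M$, which proves the claim. The only obstacle worth flagging is the one already mentioned: Proposition~\ref{prop:modular_greater_than_norm} combined with bare convexity is not enough, and one must invoke $p_G>1$.
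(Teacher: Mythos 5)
Your argument is correct, and there is nothing in the paper to compare it against: the paper states Lemma~\ref{lem:coercivityofmodular} as one of two ``simple observations'' and supplies no proof at all. Your route --- the pointwise superhomogeneity $G(\lambda x)\geq \lambda^{p_G}G(x)$ for $\lambda\geq 1$ (proved by the same $\log$-differentiation trick the paper uses in Lemma~\ref{lem:subhomogeneous}), combined with the intermediate rescaling $s=\LGnorm{u}^{1/2}$ and Proposition~\ref{prop:modular_greater_than_norm} applied to $u/s$ --- is clean, quantitative (it yields $R_G(u)/\LGnorm{u}\geq\LGnorm{u}^{(p_G-1)/2}$, more than the lemma asks for), and the use of the half-power scale is a nice way to sidestep the question of whether $\int_I G(u/\LGnorm{u})\,dt$ actually equals $1$. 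Your diagnosis that bare convexity only reproduces the linear bound and that some $\nabla_2$-type input is indispensable is also right (e.g.\ for $G(x)=|x|\log(1+|x|)$, tall thin spikes keep $R_G(u)/\LGnorm{u}$ bounded). The one caveat worth recording: everything hinges on $p_G>1$, which the paper asserts (``since $G$ satisfies $\Delta_2$ and $\nabla_2$, $1<p_G$'') but does not prove, and under assumption \ref{asm:G} the conditions hold only \emph{at infinity}, where this assertion is delicate --- a G-function that is affine along a bounded segment of some ray satisfies $\nabla_2$ at infinity yet has $p_G=1$, and your global inequality $G(\lambda x)\geq\lambda^{p_G}G(x)$ then degenerates to the useless linear bound. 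You are entitled to cite the paper's claim, but if one wanted a proof robust under the stated hypotheses, one would instead run your scaling argument using the $\nabla_2$-at-infinity constant directly (i.e.\ $G(\ell x)\geq 2\ell\,G(x)$ for $|x|\geq s_0$), splitting $I$ into the sets where $|u(t)/s|$ is large or small and absorbing the latter using $|I|<\infty$.
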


\begin{lemma}
\label{lem:norm_modular_bounded}
Let $\{u_n\}\subset \LGspace$. Then $\{u_n\}$ is bounded if and only if $\{R_G(u_n)\}$ is bounded.
\end{lemma}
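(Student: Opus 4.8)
The plan is to argue both implications via the basic relationship between the Luxemburg norm and the modular captured by Lemma \ref{lem:coercivityofmodular}, treating the ``at infinity'' case with the appropriate care (so that the argument does not secretly assume $\Delta_2$ and $\nabla_2$ hold globally). First suppose $\{u_n\}$ is bounded in $\LGspace$, say $\LGnorm{u_n}\leq M$ for all $n$. I would split into the cases $\LGnorm{u_n}\leq 1$ and $\LGnorm{u_n}>1$. In the former, $R_G(u_n)\leq \LGnorm{u_n}\leq 1$ by Proposition \ref{prop:modular_greater_than_norm} (or directly from the definition of the Luxemburg norm, since $\int_I G(u_n/\lambda)\,dt\leq 1$ for $\lambda$ arbitrarily close to $\LGnorm{u_n}\leq 1$ together with convexity and $G(0)=0$ gives $\int_I G(u_n)\,dt\leq 1$). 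In the latter case, $\LGnorm{u_n}>1$, I would use the coercivity quotient: by Lemma \ref{lem:coercivityofmodular} the function $\rho(s)=\sup\{R_G(u): \LGnorm{u}\leq s\}$ need not be used, so instead I would bound directly. Since $\LGnorm{u_n}\leq M$ and $\LGnorm{u_n}>1$, convexity of $G$ gives $G(u_n)=G\bigl(M\cdot \tfrac{u_n}{M}\bigr)\leq M\,G\bigl(\tfrac{u_n}{M}\bigr)$ pointwise (using $M\geq 1$ and $G$ convex with $G(0)=0$, so $G(\lambda y)\leq \lambda G(y)$ for $\lambda\geq 1$), whence $R_G(u_n)\leq M\int_I G(u_n/M)\,dt\leq M\int_I G(u_n/\LGnorm{u_n})\,dt\leq M$, where the last inequality is the defining property of the Luxemburg norm (the modular at the norm is $\leq 1$ when $G$ is $\Delta_2$, or by a limiting argument in general). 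Hence $\{R_G(u_n)\}$ is bounded by $\max\{1,M\}$.

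For the converse, suppose $\{R_G(u_n)\}$ is bounded but, for contradiction, $\{u_n\}$ is unbounded in $\LGspace$; passing to a subsequence, $\LGnorm{u_n}\to\infty$. Then
\begin{equation*}
\frac{R_G(u_n)}{\LGnorm{u_n}}\longrightarrow \infty
\end{equation*}
by Lemma \ref{lem:coercivityofmodular}, so $R_G(u_n)\geq \LGnorm{u_n}\cdot \frac{R_G(u_n)}{\LGnorm{u_n}}\to\infty$, contradicting boundedness of $\{R_G(u_n)\}$. Therefore $\{u_n\}$ is bounded.

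The only delicate point — the ``main obstacle'' — is making the first implication clean without over-using the global $\Delta_2/\nabla_2$ propositions, i.e. correctly invoking the elementary scaling inequality $G(\lambda y)\leq \lambda G(y)$ for $\lambda\geq 1$ (a consequence of convexity and $G(0)=0$) and the fact that $\int_I G(u/\LGnorm{u})\,dt\leq 1$, which under \ref{asm:G} holds since $G$ satisfies $\Delta_2$ at infinity and $R_G$ is then continuous on $\LGspace$. Everything else is a routine case split, and the converse is an immediate consequence of Lemma \ref{lem:coercivityofmodular}. I would present the forward direction in two lines using the scaling inequality and the reverse direction in two lines using coercivity, so the whole proof is short.
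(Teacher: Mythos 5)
There is a genuine error in your forward direction. The ``elementary scaling inequality'' you invoke, $G(\lambda y)\leq \lambda G(y)$ for $\lambda\geq 1$, is backwards: convexity together with $G(0)=0$ gives $G(ty)\leq t\,G(y)$ only for $t\in[0,1]$, and hence $G(\lambda y)\geq \lambda G(y)$ for $\lambda\geq 1$ (already for $G(x)=|x|^p$, $p>1$, one has $G(\lambda y)=\lambda^p G(y)>\lambda G(y)$). So the chain $R_G(u_n)\leq M\int_I G(u_n/M)\,dt\leq M$ collapses at its first step. This is not a repairable slip within your strategy: the implication ``norm-bounded $\Rightarrow$ modular-bounded'' is simply false without a $\Delta_2$-type condition (in a non-$\Delta_2$ Orlicz space there exist $u$ with $\LGnorm{u}$ finite but $R_G(u)=\infty$), so your stated goal of proving it ``without over-using'' $\Delta_2$ cannot succeed. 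The correct route is to use the $\Delta_2$ condition from \ref{asm:G}: pick $k$ with $2^k\geq M$; on the set where $|u_n/M|$ exceeds the $\Delta_2$-threshold iterate $G(2x)\leq K\,G(x)$ $k$ times to get $G(u_n)\leq G\bigl(2^k u_n/M\bigr)\leq K^k G(u_n/M)$ (using that $t\mapsto G(tx)$ is nondecreasing on $[0,\infty)$), while on the complementary set $G(u_n)$ is bounded by a constant depending only on $M$ and the threshold; integrating and using $\int_I G(u_n/M)\,dt\leq 1$ gives $R_G(u_n)\leq K^k + C|I|$.

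Your converse direction is fine (and could be shortened: Proposition \ref{prop:modular_greater_than_norm} already gives $R_G(u_n)\geq \LGnorm{u_n}$ whenever $\LGnorm{u_n}>1$, so boundedness of the modulars forces boundedness of the norms directly, without passing to subsequences or invoking Lemma \ref{lem:coercivityofmodular}). The case $\LGnorm{u_n}\leq 1$ of the forward direction is also correct as you wrote it.
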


The anisotropic Orlicz-Sobolev space is defined to be
\begin{equation*}
\WLGspace = \WLGspace(I,\R^N) = \{u\in \LGspace\colon \dot{u}\in \LGspace\},
\end{equation*}
with usual norm
\begin{equation*}
\WLGnorm{u} = \LGnorm{u} + \LGnorm{\dot{u}}.
\end{equation*}
It is known that elements of $\WLGspace$ are absolutely continuous functions.
An important role in our considerations plays an embedding  constant for $\WLGspace\embed \Lpspace[\infty]$. We denote this constant by $C_{\infty,G}$. Let $A_G\colon \R^N\to [0,\infty)$ be the greatest convex minorant of $G$ (see \cite{AciMaz17}), then
\[
\Lpnorm[\infty]{u} \leq \max\{1,|I|\} A_G^{-1}\left(\frac{1}{|I|}\right)\WLGnorm{u}.
\]
We introduce the following subspace of $\WLGspace$:
\begin{equation*}
\WLGzspace = \{u\in \WLGspace\colon u=0 \text{ on } \partial I \}.
\end{equation*}

It is proved in \cite[Theorem 4.5]{ChmMak17} that for every $u\in \WLGzspace$ the following form of Poincar\'
e inequality holds
\begin{equation}
\label{ineq:Poincare}
\LGnorm{u}\leq |I|\, \LGnorm{\dot{u}}.
\end{equation}
It follows that one can introduce an equivalent norm on $\WLGzspace$:
\begin{equation*}
\WLGznorm{u} = \LGnorm{\dot{u}}.
\end{equation*}

\section{Main results}

Let $G$ satisfies assumption \ref{asm:G} and let $I=[a,b]$. We consider Lagrangian $\Lcal\colon I\times\R^N\times\R^N\to \R$ given by
\[
\Lcal(t,x,v)=F(t,x,v)+V(t,x)+\inner{f(t)}{x}.
\]
We assume that $F\colon I\times\R^N\times\R^N\to \R$, $V\colon I\times\R^N\to \R$ are of class $C^1$ and satisfy
\begin{enumerate}[label=(F$_\arabic*$),ref=(F$_\arabic*$)]
\item
\label{asm:F:convex}
$F(t,x,\cdot)$ is convex for all $(t,x)\in I\times \R^N$,
\item
\label{asm:F:growth}
there exist $a\in C(\R_+,\R_+)$ and $b\in\Lpspace[1](I,\R_+)$ such that for all $(t,x,v)\in I\times \R^N\times\R^N$:
\begin{gather}
\label{F:growth:F}
|F(t,x,v)|\leq a(|x|)\,(b(t)+G(v)),\\
\label{F:growth:Fx}
|F_x(t,x,v)\leq a(|x|)\,(b(t)+G(v)), \\
\label{F:growth:Fv}
G^{\ast}(F_v(t,x,v))\leq a(|x|)\,(b(t)+\,G^{\ast}\left(\nabla G(v)\right)),
\end{gather}
\item \label{asm:F:AR}
There exist $\theta_{F} > 0 $ such that for all $(t,x,v)\in I\times \R^N\times\R^N$:
\begin{gather*}
\inner{F_x(t,x,v)}{x} + \inner{F_v(t,x,v)}{v} \leq \theta_{F}\, F(t,x,v),
\end{gather*}
\item \label{asm:F:ellipticity}
there exists $\Lambda>0$ such that for all $(t,x,v)\in I\times \R^N\times\R^N$:
\begin{equation*}
F(t,x,v) \geq \Lambda\, G(v),
\end{equation*}
\item \label{asm:F:zero}
$F(t,x,0) = 0$ for all $(t,x)\in I\times \R^N$,
\end{enumerate}


\begin{enumerate}[label=(V$_\arabic*$),ref=(V$_\arabic*$)]
\item \label{asm:V:AR}
there exist $\theta_V>1$, $\theta_V > \theta_F $ and $r_0>0$ such that for all $t\in I$
\begin{equation*}
\inner{\nabla V(t,x)}{x} \leq \theta_V V(t,x),\ |x|\geq r_0,
\end{equation*}

\item \label{asm:V:zero}
\begin{equation*}
\int_I V(t,0)\,dt=0,
\end{equation*}

\item
\label{asm:V:near_zero}
there exists $\rho_0>0$ and $g\in \Lpspace[1](I,\R)$ such that for all $t\in I$
\begin{equation*}
V(t,x)\geq -g(t),\ |x|\leq \rho_0,
\end{equation*}

\item \label{asm:V:negative}
\begin{equation*}
V(t,x) < 0,\ t\in I,\ |x|\geq r_0,
\end{equation*}
\end{enumerate}

\begin{enumerate}[label=(f)]
\item \label{asm:f}
$f\in \LGastspace(I,\R^N)$.
\end{enumerate}

Now we can state our main theorems.

\begin{theorem}
\label{thm:A}
Assume that $\rho_0\geq C_{\infty,G}$ and
\begin{equation}
\tag{A}
\label{thmA:asm_ineq}
\int_I g(t)\,dt < (\Lambda -2|I|\,\LGastnorm{f}) \frac{\rho_0}{C_{\infty,G}}.
\end{equation}
Then \eqref{eq:ELT} has at least one nontrivial solution.
\end{theorem}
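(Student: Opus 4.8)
The plan is to apply the Mountain Pass Theorem to the action functional
\[
\Jcal(u)=\int_I F(t,u,\dot u)\,dt+\int_I V(t,u)\,dt+\int_I\inner{f(t)}{u}\,dt,
\]
defined on the reflexive Banach space $\WLGzspace$ equipped with the equivalent norm $\WLGznorm{u}=\LGnorm{\dot u}$. A standard computation using the growth bounds \eqref{F:growth:F}--\eqref{F:growth:Fv}, assumption \ref{asm:f} and the H\"older inequality shows that $\Jcal\in C^1(\WLGzspace,\R)$ and that its critical points are weak, hence (by du~Bois-Reymond) a.e., solutions of \eqref{eq:ELT} with $u(a)=u(b)=0$. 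By \ref{asm:F:zero} and \ref{asm:V:zero} we have $\Jcal(0)=0$, so it is enough to produce a nonzero critical point; for this I would verify the mountain-pass geometry and the Palais--Smale condition.

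For the geometry near $0$ (Lemma~\ref{lem:J>0}), set $\rho=\rho_0/C_{\infty,G}\geq 1$. If $\WLGznorm{u}=\rho$, the embedding $\WLGzspace\embed\Lpspace[\infty]$ gives $\Lpnorm[\infty]{u}\leq C_{\infty,G}\,\WLGznorm{u}=\rho_0$, so \ref{asm:V:near_zero} yields $V(t,u(t))\geq-g(t)$ pointwise. Combining \ref{asm:F:ellipticity}, the Poincar\'e inequality \eqref{ineq:Poincare}, the H\"older inequality, and Proposition~\ref{prop:modular_greater_than_norm} (which gives $R_G(\dot u)\geq\LGnorm{\dot u}=\rho$) one obtains
\[
\Jcal(u)\geq\Lambda\,R_G(\dot u)-\int_I g\,dt-2\LGastnorm{f}\,\LGnorm{u}
\geq(\Lambda-2|I|\,\LGastnorm{f})\frac{\rho_0}{C_{\infty,G}}-\int_I g\,dt=:\alpha,
\]
and $\alpha>0$ is precisely the hypothesis \eqref{thmA:asm_ineq}. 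For the far geometry (Lemma~\ref{lem:J(e)<0}), fix $w\in\WLGzspace$ with $|w|\geq r_0$ on a set of positive measure; integrating \ref{asm:F:AR} gives $F(t,sx,sv)\leq s^{\theta_F}F(t,x,v)$ and integrating \ref{asm:V:AR} together with \ref{asm:V:negative} gives $V(t,sx)\leq s^{\theta_V}V(t,x)<0$ for $|x|\geq r_0$, $s\geq 1$, whence $\Jcal(sw)\leq s^{\theta_F}C_1-c_0 s^{\theta_V}+C_2+c_1 s$ with $c_0>0$. Since $\theta_V>\max\{1,\theta_F\}$, $\Jcal(sw)\to-\infty$, and we take $e=sw$ for $s$ large enough that $\WLGznorm{e}>\rho$.

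For Palais--Smale, let $\{u_n\}$ satisfy $\Jcal(u_n)\to c$ and $\Jcal'(u_n)\to 0$. Boundedness of $\{u_n\}$ follows, in the usual way, by bounding $\Jcal(u_n)-\tfrac1{\theta_V}\Jcal'(u_n)u_n$ from below via \ref{asm:F:AR}, \ref{asm:F:ellipticity}, \ref{asm:V:AR} and the compactness of $I\times\overline{B_{r_0}}$, and then using Lemmas~\ref{lem:coercivityofmodular} and~\ref{lem:norm_modular_bounded}. Passing to a subsequence, $u_n\toweak u$ in $\WLGzspace$ and, by the compact embedding into $\Lpspace[\infty]$, $u_n\to u$ uniformly, so $\Lpnorm[\infty]{u_n}\leq R$ for some $R$. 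Testing $\Jcal'(u_n)\to 0$ against $u_n-u$ and controlling the $F_x$, $\nabla V$ and $f$ contributions by $\Lpnorm[\infty]{u_n-u}\to 0$ (using \eqref{F:growth:Fx}, continuity of $\nabla V$ on $I\times\overline{B_R}$, boundedness of $R_G(\dot u_n)$, and \ref{asm:f}), one gets $\int_I\inner{F_v(t,u_n,\dot u_n)}{\dot u-\dot u_n}\,dt\to 0$. Convexity \ref{asm:F:convex} then yields $\limsup_n\int_I F(t,u_n,\dot u_n)\,dt\leq\int_I F(t,u,\dot u)\,dt$, while weak lower semicontinuity of the convex integrand gives the opposite inequality, so $\int_I F(t,u_n,\dot u_n)\,dt\to\int_I F(t,u,\dot u)\,dt$. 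Exploiting once more the convexity of $F(t,x,\cdot)$ (i.e.\ monotonicity of $v\mapsto F_v(t,x,v)$) together with the coercivity \ref{asm:F:ellipticity} and the $\Delta_2$, $\nabla_2$ properties of $G$, one upgrades $\dot u_n\toweak\dot u$ to $R_G(\dot u_n-\dot u)\to 0$, hence $\dot u_n\to\dot u$ in $\LGspace$ and $u_n\to u$ in $\WLGzspace$; equivalently, $\Jcal'$ is of type $(S_+)$. With $\Jcal\in C^1$, $\Jcal(0)=0$, the two geometric conditions and Palais--Smale established, the Mountain Pass Theorem produces $u^\star\in\WLGzspace$ with $\Jcal'(u^\star)=0$ and $\Jcal(u^\star)\geq\alpha>0=\Jcal(0)$; thus $u^\star$ solves \eqref{eq:ELT} and, since $\Jcal(u^\star)>0$, $u^\star$ is nontrivial.

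The step I expect to be the main obstacle is the last part of the Palais--Smale argument: promoting $\dot u_n\toweak\dot u$ (together with $\int_I\inner{F_v(t,u_n,\dot u_n)}{\dot u_n-\dot u}\,dt\to 0$) to strong convergence in $\LGspace$. Since $F(t,x,\cdot)$ is only assumed convex, one cannot invoke strict monotonicity directly; instead the argument must combine the monotonicity of $F_v$ in $v$, the lower bound $F\geq\Lambda G$, the pointwise inequality $\inner{F_v(t,x,v)}{v}\geq F(t,x,v)$ coming from \ref{asm:F:convex} and \ref{asm:F:zero}, and $\Delta_2$-regularity of $G$ and $G^\star$ (used, e.g., for a Vitali-type convergence $F_v(t,u_n,\dot u)\to F_v(t,u,\dot u)$ in $\LGastspace$), in the spirit of \cite{DeNapMar03} and \cite{ChmMak19}. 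The remaining ingredients --- the $C^1$ regularity of $\Jcal$ and the two mountain-pass inequalities --- are routine given the structural assumptions.
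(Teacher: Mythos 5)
Your overall strategy coincides with the paper's: verify $\Jcal(0)=0$, the two mountain-pass inequalities (your computations for $\inf_{\WLGznorm{u}=\rho}\Jcal(u)>0$ and for $\Jcal(e)<0$ are exactly Lemmas \ref{lem:J>0} and \ref{lem:J(e)<0}), show boundedness of Palais--Smale sequences via $\Jcal-\tfrac{1}{\theta_V}\Jcal'$ together with the Ambrosetti--Rabinowitz conditions, and derive $\int_I\inner{F_v(t,u_n,\dot u_n)}{\dot u_n-\dot u}\,dt\to 0$ and $\int_I F(t,u_n,\dot u_n)\,dt\to\int_I F(t,u,\dot u)\,dt$. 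Up to that point the proposal is sound (using weak lower semicontinuity of the convex integrand for the $\liminf$ inequality, where the paper uses Fatou with a.e.\ convergence, is a harmless variation).

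The gap is exactly where you flag it: you never carry out the promotion of $\dot u_n\toweak\dot u$ to strong convergence in $\LGspace$, and the ingredients you list would not close it. Monotonicity of $v\mapsto F_v(t,x,v)$ is only non-strict under \ref{asm:F:convex}, so $\int_I\inner{F_v(t,u_n,\dot u_n)-F_v(t,u_n,\dot u)}{\dot u_n-\dot u}\,dt\to 0$ by itself gives nothing; and convergence of the energies $\int_I F(t,u_n,\dot u_n)\,dt$ does not imply $R_G(\dot u_n)\to R_G(\dot u)$ for a general $F$, so the modular-convergence criterion cannot be invoked directly. The paper's device, which is the one missing idea, is the midpoint convexity inequality
\begin{equation*}
\frac{F(t,u_n,\dot u_n)+F(t,u_n,-\dot u)}{2}-F\left(t,u_n,\frac{\dot u_n-\dot u}{2}\right)\geq 0,
\end{equation*}
to whose (nonnegative) left-hand side one applies Fatou's lemma; since $\int_I F(t,u_n,\dot u_n)\,dt$ and $\int_I F(t,u_n,-\dot u)\,dt$ both converge to the expected limits, this forces $\int_I F\bigl(t,u_n,\tfrac{\dot u_n-\dot u}{2}\bigr)\,dt\to 0$, and then \ref{asm:F:ellipticity} gives $R_G\bigl(\tfrac{\dot u_n-\dot u}{2}\bigr)\to 0$, hence $\dot u_n\to\dot u$ in $\LGspace$ by modular convergence (\cite[Theorem 3.13]{ChmMak17}). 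Note that no Vitali-type convergence of $F_v(t,u_n,\dot u)$ in $\LGastspace$ and no use of $G^\ast$ is needed for this step --- only \ref{asm:F:convex}, \ref{asm:F:zero}, \ref{asm:F:ellipticity} and the two integral convergences you had already established.
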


Assumption $\rho_0\geq C_{\infty,G}$ can be relaxed if we assume that $G$ satisfies $\Delta_2$ and $\nabla_2$ globally. In this case we also have weaker assumptions on $V$.

\begin{theorem}
\label{thm:B}
Assume that $G$ satisfies $\Delta_2$ and $\nabla_2$ globally
and
\begin{equation}
\tag{B}
\label{thmB:asm_ineq}
\int_I g(t)\,dt + 2|I|\,\LGastnorm{f} \frac{\rho_0}{C_{\infty,G}} < \Lambda
\begin{cases}
\left(\frac{\rho_0}{C_{\infty,G}}\right)^{q_G},& \rho_0\leq C_{\infty,G}\\
\left(\frac{\rho_0}{C_{\infty,G}}\right)^{p_G},& \rho_0>C_{\infty,G}
\end{cases}
\end{equation}
Then \eqref{eq:ELT} has at least one nontrivial solution.
\end{theorem}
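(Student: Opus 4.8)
The plan is to realise a solution of \eqref{eq:ELT} as a nonzero critical point of the action functional
\[
\Jcal(u)=\int_I\bigl[F(t,u,\dot u)+V(t,u)+\inner{f(t)}{u}\bigr]\,dt,\qquad u\in X:=\WLGzspace,
\]
which, by the growth conditions \ref{asm:F:growth} and by \ref{asm:f}, is well defined and of class $C^1$ on $X$, its critical points being exactly the weak solutions of \eqref{eq:ELT}. Granting that $\Jcal$ satisfies the Palais--Smale condition --- boundedness of Palais--Smale sequences following from the Ambrosetti--Rabinowitz inequalities \ref{asm:F:AR}, \ref{asm:V:AR} together with \ref{asm:V:negative}, and extraction of a strongly convergent subsequence following from the $(S_+)$-type argument sketched in the introduction --- the proof of Theorem~\ref{thm:B} reduces to verifying the mountain pass geometry of $\Jcal$ under the hypotheses of the theorem (Lemmas~\ref{lem:J(e)<0} and \ref{lem:J>0}) and then invoking the Mountain Pass Theorem.

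For the geometry near the origin I would work with the equivalent norm $\WLGznorm{u}=\LGnorm{\dot u}$ on $X$ and estimate $\Jcal$ on the sphere $\WLGznorm{u}=\rho:=\rho_0/C_{\infty,G}$. First, $\Jcal(0)=0$ by \ref{asm:F:zero} and \ref{asm:V:zero}. On that sphere the embedding $\WLGspace\embed\Lpspace[\infty]$ (using the Poincar\'e inequality \eqref{ineq:Poincare} to pass to the norm $\WLGznorm{\cdot}$) gives $\Lpnorm[\infty]{u}\le\rho_0$, so \ref{asm:V:near_zero} yields $\int_I V(t,u)\,dt\ge-\int_I g$; the H\"older inequality and \eqref{ineq:Poincare} give $\int_I\inner{f(t)}{u}\,dt\ge-2|I|\,\LGastnorm{f}\,\rho$; and \ref{asm:F:ellipticity} gives $\int_I F(t,u,\dot u)\,dt\ge\Lambda\,R_G(\dot u)$. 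Since $G$ satisfies $\Delta_2$ and $\nabla_2$ globally, Proposition~\ref{prop:modular_greater_than_norm_globally} bounds $R_G(\dot u)$ below by $\rho^{q_G}$ when $\rho\le1$ and by $\rho^{p_G}$ when $\rho>1$; the two cases $\rho_0\le C_{\infty,G}$ and $\rho_0>C_{\infty,G}$ of \eqref{thmB:asm_ineq} are precisely these, so \eqref{thmB:asm_ineq} produces $\alpha>0$ with $\Jcal(u)\ge\alpha$ on the sphere $\WLGznorm{u}=\rho$. This is the only place the global $\Delta_2/\nabla_2$ hypothesis (hence Proposition~\ref{prop:modular_greater_than_norm_globally} in place of Proposition~\ref{prop:modular_greater_than_norm}) is used, and it is what permits relaxing $\rho_0\ge C_{\infty,G}$ and weakening the hypotheses on $V$ compared with Theorem~\ref{thm:A}.

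For the point beyond the sphere I would fix $w\in X$ admitting a set of positive measure on which $|w(t)|\ge r_0$, and integrate the Ambrosetti--Rabinowitz inequalities along the ray $s\mapsto sw$: \ref{asm:F:AR} gives $F(t,sw,s\dot w)\le s^{\theta_F}F(t,w,\dot w)$ for $s\ge1$, while \ref{asm:V:AR} together with \ref{asm:V:negative} gives $V(t,sw(t))\le s^{\theta_V}V(t,w(t))$ for $s\ge1$ on that set, with strictly negative leading term there. Because $\theta_V>\theta_F$ and $\theta_V>1$, the $s^{\theta_V}$ term dominates, so $\Jcal(sw)\to-\infty$ as $s\to\infty$, and $e:=sw$ with $s$ large gives $\WLGznorm{e}>\rho$ and $\Jcal(e)<0$. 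With $\Jcal(0)=0$, the sphere estimate, the point $e$, and the Palais--Smale condition, the Mountain Pass Theorem yields a critical point $u^\star$ with $\Jcal(u^\star)\ge\alpha>0=\Jcal(0)$; in particular $u^\star\ne0$, and since elements of $\WLGspace$ are absolutely continuous, $u^\star$ is a nontrivial solution of \eqref{eq:ELT} in the a.e.\ sense.

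The genuinely hard part, as the introduction already flags, is not this geometry but the Palais--Smale condition, and within it the strong convergence $\dot u_n\to\dot u$ in $\LGspace$: one first shows $\int_I\inner{F_v(t,u_n,\dot u_n)}{\dot u-\dot u_n}\,dt\to0$ using the $C^1$ structure, the weak convergence $u_n\toweak u$, the compactness of $\WLGspace\embed\Lpspace[\infty]$ and the growth bound \eqref{F:growth:Fv}, then converts this via convexity \ref{asm:F:convex} of $F(t,x,\cdot)$ and the coercive bound \ref{asm:F:ellipticity} into convergence of modulars $R_G(\dot u_n)\to R_G(\dot u)$, and finally into norm convergence via Lemma~\ref{lem:norm_modular_bounded} and the $\Delta_2$ property. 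I expect Theorem~\ref{thm:B} itself to be short once Lemmas~\ref{lem:J(e)<0}, \ref{lem:J>0} and the Palais--Smale proposition are in hand: it is essentially the assembly above, differing from Theorem~\ref{thm:A} only in the use of the sharp global modular estimate in the wall estimate.
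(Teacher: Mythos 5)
Your assembly of Theorem~\ref{thm:B} is correct and follows the paper's route exactly: Palais--Smale via the Ambrosetti--Rabinowitz conditions, $\Jcal(0)=0$, the sphere estimate at $\rho=\rho_0/C_{\infty,G}$ using \ref{asm:F:ellipticity}, \ref{asm:V:near_zero}, H\"older, Poincar\'e and Proposition~\ref{prop:modular_greater_than_norm_globally} (this being the only place the global $\Delta_2/\nabla_2$ hypothesis enters, as you say), the unbounded ray via Lemma~\ref{lem:subhomogeneous}, and then the Mountain Pass Theorem. One small caveat on your side remark about the Palais--Smale proof: you describe the final step as passing to convergence of modulars $R_G(\dot u_n)\to R_G(\dot u)$, but the paper explicitly notes that for a general integrand $F$ the convergence $\int_I F(t,u_n,\dot u_n)\,dt\to\int_I F(t,u,\dot u)\,dt$ does \emph{not} yield this; instead one shows $\int_I F\left(t,u_n,\tfrac{\dot u_n-\dot u}{2}\right)dt\to0$ and applies \ref{asm:F:ellipticity} to get $R_G\left(\tfrac{\dot u_n-\dot u}{2}\right)\to0$, hence $\dot u_n\to\dot u$ in $\LGspace$ directly.
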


One can show that, in fact, every solution of \eqref{eq:ELT} is of class $\Wspace{1,\infty}$ (see \cite[Proposition 3.5]{ChmMak19}).

\subsection{Some remarks on assumptions}

Assumptions \ref{asm:F:AR} and \ref{asm:V:AR} are Ambrosetti-Rabinowitz type conditions. It follows that $F$ and $V$ are subhomogeneous respectively everywhere and
for large arguments (cf. \cite{DeNapMar03}).
\begin{lemma}
\label{lem:subhomogeneous}
For every $\lambda>1$
\begin{enumerate}
\item \begin{equation*}
F(t,\lambda x, \lambda v)\leq \lambda^{\theta_F} F(t,x,v)\
\text{ for all $(t,x,v)\in I\times \R^N\times \R^N$}
\end{equation*}

\item
\begin{equation*}
V(t,\lambda x)\leq \lambda^{\theta_V} V(t,x)\ \text{ for all $t\in I$, $|x|\geq r_0$}
\end{equation*}
\end{enumerate}
\end{lemma}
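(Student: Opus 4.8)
The plan is to derive both inequalities directly from the Ambrosetti-Rabinowitz conditions \ref{asm:F:AR} and \ref{asm:V:AR} by integrating along rays. For part (a), fix $(t,x,v)$ and define $\varphi(s) = F(t,sx,sv)$ for $s \geq 1$. Then $\varphi$ is $C^1$ and, by the chain rule,
\[
\varphi'(s) = \inner{F_x(t,sx,sv)}{x} + \inner{F_v(t,sx,sv)}{v} = \frac{1}{s}\left(\inner{F_x(t,sx,sv)}{sx} + \inner{F_v(t,sx,sv)}{sv}\right).
\]
Applying \ref{asm:F:AR} at the point $(t,sx,sv)$ gives $\varphi'(s) \leq \frac{\theta_F}{s}\varphi(s)$. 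This is a differential inequality of Gronwall type; dividing (formally) by $\varphi$ and integrating from $1$ to $\lambda$ yields $\varphi(\lambda) \leq \lambda^{\theta_F}\varphi(1)$, i.e. the claimed bound. To make this rigorous without sign assumptions on $\varphi$, I would instead consider $\psi(s) = s^{-\theta_F}\varphi(s)$ and compute $\psi'(s) = s^{-\theta_F-1}(s\varphi'(s) - \theta_F\varphi(s)) \leq 0$ for $s \geq 1$, so $\psi$ is nonincreasing on $[1,\infty)$; hence $\psi(\lambda) \leq \psi(1)$, which is exactly $F(t,\lambda x,\lambda v) \leq \lambda^{\theta_F}F(t,x,v)$ for $\lambda > 1$.

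For part (b) the argument is the same but one must respect the restriction $|x| \geq r_0$ in \ref{asm:V:AR}. Fix $t \in I$ and $x$ with $|x| \geq r_0$, and set $\phi(s) = V(t,sx)$ for $s \geq 1$. Then $\phi'(s) = \inner{\nabla V(t,sx)}{x} = \frac{1}{s}\inner{\nabla V(t,sx)}{sx}$, and since $|sx| = s|x| \geq |x| \geq r_0$ for all $s \geq 1$, condition \ref{asm:V:AR} applies at every such point, giving $\phi'(s) \leq \frac{\theta_V}{s}\phi(s)$. Setting $\eta(s) = s^{-\theta_V}\phi(s)$ we get $\eta'(s) \leq 0$ on $[1,\infty)$, so $\eta(\lambda) \leq \eta(1)$, which is $V(t,\lambda x) \leq \lambda^{\theta_V}V(t,x)$ for $\lambda > 1$ and $|x| \geq r_0$.

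The only subtlety — and the one point that deserves care rather than being an actual obstacle — is that neither $\varphi$ nor $\phi$ need be positive, so one cannot literally "divide by $F$" and integrate; the monotonicity trick with $s^{-\theta_F}\varphi(s)$ (resp. $s^{-\theta_V}\phi(s)$) sidesteps this cleanly, using only that $F$ and $V$ are $C^1$ so that these auxiliary functions are absolutely continuous on $[1,\lambda]$ with the computed derivatives. Everything else is the chain rule and the hypotheses applied verbatim along the ray through the origin (for $F$) or along the ray restricted to $\{|x| \geq r_0\}$ (for $V$).
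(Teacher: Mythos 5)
Your proof is correct and rests on the same core idea as the paper's: differentiate along the ray $s\mapsto(t,sx,sv)$ and apply the Ambrosetti--Rabinowitz condition to get the differential inequality $s\varphi'(s)\le\theta_F\varphi(s)$. The two write-ups differ only in how that inequality is integrated, but the difference is substantive. The paper writes $\log\bigl(F(t,\lambda x,\lambda v)/F(t,x,v)\bigr)=\int_1^\lambda\tfrac{d}{ds}\log F(t,sx,sv)\,ds$ and divides by $F$ inside the integrand, which tacitly assumes $F>0$ along the whole ray; by \ref{asm:F:ellipticity} one only has $F\ge\Lambda G(v)\ge 0$, and $F(t,x,0)=0$ by \ref{asm:F:zero}, so the logarithm can degenerate. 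For part b) the paper's ``similar'' argument is even more delicate, since $V$ is negative for $|x|\ge r_0$ by \ref{asm:V:negative}, and dividing the inequality of \ref{asm:V:AR} by a negative $V$ reverses its direction (the conclusion still comes out right, but only after tracking the sign flip through $\log|V|$). Your monotonicity trick with $\psi(s)=s^{-\theta_F}\varphi(s)$, resp. $\eta(s)=s^{-\theta_V}\phi(s)$, sidesteps both issues: it uses no sign information at all, only that the auxiliary function is $C^1$ with nonpositive derivative on $[1,\lambda]$. So your version is not merely correct; it closes the small positivity gap left implicit in the paper and is the form of the argument that applies verbatim to $V$ in part b).
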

\begin{proof}
Let $(t,x,v)\in I\times \R^N\times \R^N$ and $\lambda>1$, then
\begin{multline*}
\log\left(\frac{F(t,\lambda x,\lambda v)}{F(t,x,v)}\right)
=
\int_1^\lambda \frac{d}{d\lambda} \log F(t,\lambda x,\lambda v)\,d\lambda
= \\ =
\int_1^\lambda \frac{\inner{F_x(t,\lambda x,\lambda v)}{x} + \inner{F_v(t,\lambda x, \lambda v)}{v}}{F(t,\lambda x,\lambda v)} \,d\lambda
\leq
\int_1^\lambda  \frac{\theta_{F}}{\lambda} d\lambda
=
\log \lambda^{\theta_F}.
\end{multline*}
by \ref{asm:F:AR} and the result follows. The proof of b) is similar
\end{proof}

\section{Proof of the main theorems}

Define action functional $\Jcal\colon \WLGzspace(I,\R^N)\to \R$ by
\begin{equation}
\label{eq:J}\tag{$\Jcal$}
\Jcal(u)=\int_I F(t,u,\dot{u}) + V(t,u) + \inner{f}{u}\,dt
\end{equation}
Under above assumptions, $\Jcal$ is well defined and of class $C^1$. Furthermore, its derivative is given by
\begin{equation}
\label{eq:J'}\tag{$\Jcal'$}
\Jcal'(u)\varphi =
\int_I \inner{F_x(t,u,\dot{u})}{\varphi}\,dt +
\int_I \inner{F_v(t,u,\dot{u})}{\dot{\varphi}}\,dt +
\int_I \inner{\nabla V (t,u)}{\varphi} + \inner{f}{\varphi}\, dt
\end{equation}
See \cite[Theorem 5.7]{ChmMak17} for more details. It is standard to prove that critical points of $\Jcal|_{\WLGzspace}$ are solutions of \eqref{eq:ELT}.

Our proof is based on the well-known Mountain Pass Theorem (see \cite{AmbRab73}).

\begin{theorem}
\label{thm:mpt}
Let $X$ be a real Banach space and $I\in C^1(X,\R)$ satisfies the following conditions:
\begin{enumerate}
    \item $I$ satisfies Palais-Smale condition,
    \item $I(0)=0$,
    \item there exists $\alpha>0$ such that $I\rvert_{\partial B_{\rho}(0)}\geq \alpha$,
   \item there exist $\rho>0$, $e\in X$ such that $\norm{e}_X>\rho$ and $I(e)<0$.
\end{enumerate}
Then $I$ possesses a critical value  $c\geq\alpha$ given by
$
c=\inf_{g\in\Gamma}\max_{s\in[0,1]}I(g(s)),
$
where $\Gamma=\{g\in C([0,1],X):~~ g(0)=0,~ g(1)=e\}.$
\end{theorem}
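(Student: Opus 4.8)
The plan is to run the standard minimax argument built on a deformation lemma. Set $c=\inf_{g\in\Gamma}\max_{s\in[0,1]}I(g(s))$; the family $\Gamma$ is nonempty (it contains the path $s\mapsto se$) and $c<\infty$ because $I$ is continuous and $[0,1]$ is compact.

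First I would prove the lower bound $c\geq\alpha$. Fix $g\in\Gamma$. The scalar function $s\mapsto\norm{g(s)}_X$ is continuous, equals $0$ at $s=0$ and exceeds $\rho$ at $s=1$, so by the intermediate value theorem it attains the value $\rho$ at some $s_0\in(0,1)$; then $g(s_0)\in\partial B_\rho(0)$ and condition (3) gives $\max_{s\in[0,1]}I(g(s))\geq I(g(s_0))\geq\alpha$. Taking the infimum over $g\in\Gamma$ yields $c\geq\alpha>0$.

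Next I would argue by contradiction: suppose $c$ is a regular value. Invoking the Palais--Smale condition (1), the deformation lemma provides, for every small $\varepsilon>0$, a continuous map $\eta\colon[0,1]\times X\to X$ with $\eta(0,\cdot)=\mathrm{id}_X$, with $\eta(\tau,u)=u$ whenever $|I(u)-c|\geq 2\varepsilon$, and with $\eta\bigl(1,\{I\leq c+\varepsilon\}\bigr)\subset\{I\leq c-\varepsilon\}$. Since $c\geq\alpha>0$, choose $\varepsilon$ so small that $c-2\varepsilon>0$; then both $I(0)=0$ and $I(e)<0$ lie strictly below $c-2\varepsilon$, so $\eta(1,\cdot)$ fixes $0$ and $e$. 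Pick $g\in\Gamma$ with $\max_{s}I(g(s))<c+\varepsilon$, so that $g([0,1])\subset\{I\leq c+\varepsilon\}$, and set $\tilde g(s)=\eta(1,g(s))$. Then $\tilde g\in\Gamma$ while $\max_{s}I(\tilde g(s))\leq c-\varepsilon<c$, contradicting the definition of $c$. Hence $c$ is a critical value, and $c\geq\alpha$ by the first step.

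The hard part is the deformation lemma itself. One constructs a locally Lipschitz pseudo-gradient vector field for $I$ on the (paracompact) set of regular points, multiplies it by a Lipschitz cutoff that vanishes outside $I^{-1}([c-2\varepsilon,c+2\varepsilon])$ and near the low-gradient set, normalizes so the field is bounded, and integrates it to obtain the flow $\eta$. The role of the Palais--Smale condition is to guarantee that, after shrinking $\varepsilon$, one has $\norm{I'(u)}\geq\delta>0$ uniformly on $I^{-1}([c-2\varepsilon,c+2\varepsilon])$; this lower bound forces $I$ to drop by at least $\varepsilon$ along the flow as it crosses the strip in time $1$, and it also rules out finite-time blow-up of the flow lines. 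The remaining verifications are routine.
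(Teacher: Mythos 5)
The paper does not prove this statement at all: it is quoted as the classical Ambrosetti--Rabinowitz Mountain Pass Theorem with a citation to the original 1973 paper, so there is no in-paper argument to compare against. Your sketch is the standard proof of that classical result --- the intermediate-value argument for $c\geq\alpha$, the quantitative deformation lemma built from a pseudo-gradient flow, and the contradiction obtained by deforming a near-optimal path while keeping the endpoints fixed (which works because $I(0)=0$ and $I(e)<0$ both lie below $c-2\varepsilon$ once $\varepsilon<c/2$) --- and it is correct at the level of detail appropriate for a sketch.
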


We divide the proof into sequence of lemmas.

\subsection{The Palais-Smale condition}

Now we show that $\Jcal$ satisfies the Palais-Smale condition. We divide the proof into two steps. First we show that every (PS)-sequence is bounded and then that it contains a convergent subsequence.

The first part of the proof is standard. Let us note that assumptions \ref{asm:F:AR}, \ref{asm:F:ellipticity} and \ref{asm:V:AR} are crucial.
The second part is more involved, let us outline it. First we show that $u_n\toweak u$ and embedding $\WLGspace\embed \Lpspace[\infty]$ imply that
\[
\int_I \inner{F_v(t,u_n,\dot{u}_n)}{\dot{u}-\dot{u}_n}\,dt \to 0.
\]
Then we show that
\[
\int_I F(t,u_n,\dot{u}_n)\,dt\to \int_I F(t,u,\dot{u})\,dt.
\]
In the case of p-Laplacian equation (i.e. $F(t,x,v)=\frac{1}{p}|v|^p$), the last condition implies that $\dot{u}_n\to \dot{u}$. The same is true if $F(t,u,v)=G(v)$, since in this case $R_G(\dot{u}_n)\to R_G(\dot{u})$. The last condition implies desired convergence for $\{\dot{u}_n\}$ (see \cite[Lemma 3.16]{ChmMak17} and \cite[p. 593]{ChmMak19}).

In our case this argument does not apply directly because convergence of above integrals does not imply that $R_G(\dot{u}_n)\to R_G(\dot{u})$. However, we can extend the reasoning presented in the proof of \cite[Lemma 3.16]{ChmMak17} to our general integrand and show that
\[
\int_I F\left(t,u_n,\frac{\dot{u}_n-\dot{u}}{2}\right)\,dt  \to 0
\]
and then apply condition \ref{asm:F:ellipticity} to show that $R_G(\dot{u_n}-\dot{u})\to 0$ and hence $\dot{u}_n\to \dot{u}$ in $\LGspace$ by \cite[Lemma 3.13]{ChmMak17}.

\begin{lemma}
    \label{lem:ps}
Functional $\Jcal$ satisfies the Palais-Smale condition
\end{lemma}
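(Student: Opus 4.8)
The plan is to follow the two-step strategy the authors outlined in the paragraphs preceding the statement: first show every Palais--Smale sequence is bounded in $\WLGzspace$, and then extract a strongly convergent subsequence. For boundedness, let $\{u_n\}$ be a (PS)-sequence, so $\Jcal(u_n)$ is bounded and $\Jcal'(u_n)\to 0$ in the dual; in particular $\Jcal'(u_n)u_n = o(\WLGznorm{u_n})$. Forming the combination $\theta_V \Jcal(u_n) - \Jcal'(u_n)u_n$ and using the Ambrosetti--Rabinowitz conditions \ref{asm:F:AR} and \ref{asm:V:AR} (splitting the integral over $V$ into the region $|u_n|\le r_0$, where $V$ and $\langle\nabla V,u_n\rangle$ are bounded by constants depending on $r_0$, and $|u_n|\ge r_0$), together with $\theta_V>\theta_F$ and $\theta_V>1$, one obtains an estimate of the form
\begin{equation*}
(\theta_V-\theta_F)\int_I F(t,u_n,\dot u_n)\,dt + (\theta_V-1)\int_I\inner{f}{u_n}\,dt \le C_1 + C_2\,\WLGznorm{u_n}.
\end{equation*}
Applying \ref{asm:F:ellipticity} to bound $\int_I F \ge \Lambda R_G(\dot u_n)$ and the H\"older and Poincar\'e inequalities to absorb the $\inner{f}{u_n}$ term, this gives $R_G(\dot u_n) \le C_3 + C_4\,\LGnorm{\dot u_n}$, whence $R_G(\dot u_n)/\LGnorm{\dot u_n}$ stays bounded; by Lemma~\ref{lem:coercivityofmodular} the sequence $\LGnorm{\dot u_n}=\WLGznorm{u_n}$ is bounded.

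Since $\WLGzspace$ is reflexive, pass to a subsequence with $u_n\toweak u$ in $\WLGzspace$; by the embedding $\WLGspace\embed\Lpspace[\infty]$ (with constant $C_{\infty,G}$) one has, along a further subsequence, $u_n\to u$ uniformly on $I$, and $\{u_n\}$, $\{\dot u_n\}$ stay in a fixed ball of $\LGspace$. The next step is to show
\begin{equation*}
\int_I\inner{F_v(t,u_n,\dot u_n)}{\dot u - \dot u_n}\,dt \to 0.
\end{equation*}
Here I would use $\Jcal'(u_n)(u_n-u)\to 0$ (as $\Jcal'(u_n)\to 0$ and $\{u_n-u\}$ is bounded), and control the two remaining pieces of $\Jcal'(u_n)(u_n-u)$: the term $\int_I\inner{F_x(t,u_n,\dot u_n)}{u_n-u}\,dt$ is handled by the growth bound \eqref{F:growth:Fx}, the uniform smallness of $u_n-u$, and boundedness of $\int_I G(\dot u_n)$; and $\int_I(\inner{\nabla V(t,u_n)}{u_n-u}+\inner{f}{u_n-u})\,dt\to 0$ by uniform convergence and integrability of $f$ and the ($C^1$, hence locally bounded) $\nabla V$ on the compact set $I\times\overline{B}$. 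This yields the displayed limit. Convexity of $F(t,x,\cdot)$ then gives $F(t,u_n,\dot u_n) - F(t,u_n,\dot u) \le \inner{F_v(t,u_n,\dot u_n)}{\dot u_n - \dot u}$, and since $F(t,u_n,\dot u)\to F(t,u,\dot u)$ in $\Lpspace[1]$ by the growth bound \eqref{F:growth:F} and uniform convergence of $u_n$ (dominated convergence), one concludes $\limsup_n\int_I F(t,u_n,\dot u_n)\,dt \le \int_I F(t,u,\dot u)\,dt$; the reverse inequality follows from weak lower semicontinuity, so $\int_I F(t,u_n,\dot u_n)\,dt\to\int_I F(t,u,\dot u)\,dt$.

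For the final strong-convergence step I would mimic the proof of \cite[Lemma 3.16]{ChmMak17}: by convexity of $F(t,x,\cdot)$,
\begin{equation*}
F\!\left(t,u_n,\frac{\dot u_n-\dot u}{2}\right) \le \frac12 F(t,u_n,\dot u_n) + \frac12 F(t,u_n,-\dot u) - \text{(a nonnegative correction)},
\end{equation*}
more precisely one writes $\tfrac12 F(t,u_n,\dot u_n)+\tfrac12 F(t,u_n,-\dot u) - F(t,u_n,\tfrac{\dot u_n-\dot u}{2}) \ge 0$ and shows the left side tends to $0$ in $\Lpspace[1]$: the first term converges to $\tfrac12\int_I F(t,u,\dot u)$ by the previous step, the second converges to $\tfrac12\int_I F(t,u,-\dot u)$ by dominated convergence and \ref{asm:F:convex} evenness is not needed but continuity in $x$ suffices, and a Fatou/lower-semicontinuity argument on the mixed term pins the limit of $\int_I F(t,u_n,\tfrac{\dot u_n-\dot u}{2})\,dt$ at $\int_I F(t,u,0)\,dt = 0$ by \ref{asm:F:zero}. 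Then \ref{asm:F:ellipticity} gives $0\le\Lambda\int_I G\!\left(\tfrac{\dot u_n-\dot u}{2}\right)dt \le \int_I F(t,u_n,\tfrac{\dot u_n-\dot u}{2})\,dt\to 0$, so $R_G\!\big(\tfrac{\dot u_n-\dot u}{2}\big)\to 0$, and by \cite[Lemma 3.13]{ChmMak17} $\dot u_n\to\dot u$ in $\LGspace$, i.e. $u_n\to u$ in $\WLGzspace$. I expect the main obstacle to be this last step — controlling the mixed convexity term and justifying that the limit of $\int_I F(t,u_n,\tfrac{\dot u_n-\dot u}{2})\,dt$ is exactly $\int_I F(t,u,0)\,dt$ rather than merely bounded above by it, which requires combining the pointwise convexity inequality with weak lower semicontinuity of the modular-type functionals and careful use of the $x$-continuity and growth of $F$; the boundedness step and the $F_v$-limit are comparatively routine.
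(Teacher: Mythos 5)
Your proposal is correct and follows essentially the same route as the paper: the same $\theta_V\Jcal(u_n)-\Jcal'(u_n)u_n$ estimate with \ref{asm:F:AR}, \ref{asm:V:AR}, \ref{asm:F:ellipticity} and Lemma~\ref{lem:coercivityofmodular} for boundedness, the same limit $\int_I\inner{F_v(t,u_n,\dot u_n)}{\dot u_n-\dot u}\,dt\to 0$, and the same convexity trick from \cite[Lemma 3.16]{ChmMak17} combined with \ref{asm:F:ellipticity} and \ref{asm:F:zero} to force $R_G\bigl(\tfrac{\dot u_n-\dot u}{2}\bigr)\to 0$. The only cosmetic difference is that you invoke weak lower semicontinuity for the reverse inequality $\int_I F(t,u,\dot u)\,dt\leq\liminf_n\int_I F(t,u_n,\dot u_n)\,dt$, where the paper uses Fatou together with $F\geq 0$ and a.e.\ convergence of $\dot u_n$.
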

\begin{proof}
Fix $u\in \WLGzspace$.
 From assumptions \ref{asm:F:AR} and \ref{asm:F:ellipticity} we obtain
\begin{multline}
\label{lem:PS:F_below}
\int_I \theta_V F(t,u,\dot{u}) - \inner{F_x(t,u,\dot{u})}{u} - \inner{F_v(t,u,\dot{u})}{\dot{u}}\,dt
\geq \\ \geq
(\theta_V-\theta_{F}) \int_I F(t,u,\dot{u}) \,dt
\geq
C_1 \int_I G(\dot{u}) \,dt,
\end{multline}
where $C_1=\Lambda(\theta_V-\theta_{F})>0$, since $\theta_V>\theta_F$.
Set $M=\sup\{ |\theta_V V(t,x)-\inner{\nabla V(t,x)}{x}| \colon t\in I, |x|\leq r_0 \}$, then by \ref{asm:V:AR} we obtain
\begin{equation}
\label{lem:PS:V_below}
\int_I \theta_V V(t,u) - \inner{\nabla V(t,u)}{u} \,dt
\geq
\int_{\{ |u(t)|>r_0 \}} \theta_V V(t,u) - \inner{\nabla V(t,u)}{u} \,dt - |I| M
\geq
- |I| M.
\end{equation}
We also have, by H\"older's inequality, \eqref{ineq:Poincare} and \ref{asm:f}, that
\begin{equation}
\label{lem:PS:f_below}
(\theta_V-1) \int_I \inner{f(t)}{u} \,dt
\geq
-2(\theta_V-1)\LGastnorm{f} \LGnorm{u}
\geq
-  C_2 \LGnorm{\dot{u}},
\end{equation}
where $C_2=2 |I|(\theta_V-1)\LGastnorm{f}>0$.
From \eqref{eq:J} and \eqref{eq:J'} we get
\begin{multline*}
\theta_V \Jcal(u) - \Jcal'(u)u
=
\int_I \theta_V F(t,u,\dot{u}) - \inner{F_x(t,u,\dot{u})}{u} - \inner{F_v(t,u,\dot{u})}{\dot{u}}\,dt
 + \\ +
\int_I \theta_V V(t,u) - \inner{\nabla V(t,u)}{u} \,dt
+
(\theta_V-1) \int_I \inner{f(t)}{u} \,dt.
\end{multline*}

Using \eqref{lem:PS:F_below}, \eqref{lem:PS:V_below} and \ref{lem:PS:f_below} we obtain
\begin{equation*}
C_1 \int_I G(\dot{u}) \,dt
\leq
\theta_V |\Jcal(u)| + \norm{\Jcal'(u)} \WLGznorm{u} + C_2 \LGnorm{\dot{u}} + |I| M.
\end{equation*}
Let $\{u_n\}\subset \WLGzspace$ be a Palais-Smale sequence, i.e. $\{\Jcal(u_n)\}$ is bounded and $\Jcal'(u_n)\to 0$.  If $\{u_n\}$ is not bounded, we may assume that $\WLGznorm{u_n}\to \infty$. Then dividing by $\WLGznorm{u_n}=\LGnorm{\dot{u}_n}$ we get
\begin{equation*}
\frac{C_1}{\LGnorm{\dot{u}_n}}\int_I G(\dot{u}_n)\, dt
\leq
\frac{\theta_V |\Jcal(u_n)|}{\LGnorm{\dot{u}_n}}
+
\norm{\Jcal'(u_n)} + C_2 + \frac{|I| M}{\LGnorm{\dot{u}_n}}.
\end{equation*}
Letting $n\to \infty$, we obtain a contradiction with Lemma \ref{lem:coercivityofmodular}, thus $\{u_n\}$ is bounded.

Next we show that $\{u_n\}$ has a convergent subsequence.  Passing to a subsequence if necessary, we
%
may assume that $u_n\to u$ in $\Lpspace[\infty]$, $\{\dot{u}_n\}$ bounded in $\LGspace$,  $\dot{u}_n\to \dot{u}$ a.e. and $u_n\to u$ a.e.

Since $\Jcal'(u_n)\to 0$ and $\{u_n-u\}$ is bounded in $\WLGzspace$, we conclude that
\begin{equation*}
\lim_{n\to\infty} \inner{\Jcal'(u_n)}{u_n-u} = 0,
\end{equation*}
from the other hand, 
\begin{equation*}
\lim_{n\to\infty}\int_I \inner{\nabla V(t,u_n)+f(t)}{u_n-u} \,dt = 0.
\end{equation*}
Thus, by \eqref{eq:J'} we have that
\begin{equation*}
\lim_{n\to\infty}
\int_I \inner{F_x(t,u_n,\dot{u}_n)}{u_n-u}\,dt +
\int_I \inner{F_v(t,u_n,\dot{u}_n)}{\dot{u}_n-\dot{u}}\,dt
= 0
\end{equation*}

Define nondecreasing function $\alpha(s)=\sup_{\tau\in[0,s]}a(\tau)$. Since $\{u_n\}$ is bounded in $\WLGzspace$, there exists $C_3>0$ such that
\begin{equation*}
a(|u_n(t)|) \leq \alpha(\LGnorm[\infty]{u_n})\leq C_3
\end{equation*}
and there exists $C_4>0$ such that
\[
\int_I G(\dot{u}_n)\,dt\leq C_4.
\]
It follows from \eqref{F:growth:Fx} and the above that $\LGnorm[1]{F_x(\cdot,u_n,\dot{u}_n)}$ is uniformly bounded.
Since $u_n\to u$ in $\Lpspace[\infty]$, we get
\[
\left| \int_I \inner{F_x(t,u_n,\dot{u}_n)}{u_n-u}\,dt\right |
\leq
\LGnorm[1]{F_x(\cdot,u_n,\dot{u}_n)} \LGnorm[\infty]{u_n-u}\to 0.
\]
and consequently
\begin{equation}
\label{eq:FVto0}
\lim_{n\to\infty} \int_I \inner{F_v(t,u_n,\dot{u}_n)}{\dot{u}_n-\dot{u}}\,dt = 0.
\end{equation}

By continuity of $F$ we have that $F(t,u_n(t),\pm \dot{u}(t)) \to F(t,u(t),\pm \dot{u}(t))$ a.e. From \eqref{F:growth:F} and $\dot{u}\in \LGspace$ we get
\begin{equation*}
|F(t,u_n(t),\pm\dot{u}(t)|\leq C_3(b(t)+G(\pm \dot{u}(t)) \in \Lpspace[1].
\end{equation*}
Hence
\begin{equation}
\label{eq:F_t_un_dotu_converges}
\lim_{n\to\infty} \int_I F(t,u_n,\pm\dot{u})\,dt = \int_I F(t,u,\pm\dot{u})\,dt.
\end{equation}
From the other hand, convexity of $F(t,x,\cdot)$, \eqref{eq:FVto0} and \eqref{eq:F_t_un_dotu_converges} yields
\begin{equation*}
\limsup_{n\to \infty}\int_I F(t,u_n,\dot{u}_n)\,dt
\leq
\lim_{n\to \infty} \int_I F(t,u_n,\dot{u})
+
\inner{F_v(t,u_n,\dot{u}_n)}{\dot{u}_n-\dot{u}}\,dt
= \int_I F(t,u,\dot{u})\,dt.
\end{equation*}
Since $F(t,u_n(t),\dot{u}_n(t))\geq 0$ and $F(t,u_n(t),\dot{u}_n(t)) \to F(t,u(t),\dot{u}(t)) \text{ a.e.}$, we have
\begin{equation*}
\int_I F(t,u,\dot{u})\,dt \leq \liminf_{n\to \infty}\int_I F(t,u_n,\dot{u}_n)\,dt
\end{equation*}
by Fatou's Theorem.
Finally,
\begin{equation}
\label{eq:F_t_un_dotun_converges}
\lim_{n\to \infty}\int_I F(t,u_n,\dot{u}_n)\,dt = \int_I F(t,u,\dot{u})\,dt.
\end{equation}

Now we are in position to show that $\dot{u}_n\to \dot{u}$ in $\LGspace$. The following is a modification of \cite[Lemma 3.16]{ChmMak17}. Convexity of $F(t,x,\cdot)$ yields
\begin{equation*}
\frac{F(t,u_n(t),\dot{u}_n(t)) + F(t,u_n(t),-\dot{u}(t))}{2} - F\left(t,u_n(t),\frac{\dot{u}_n(t)-\dot{u}(t)}{2}\right)\geq 0.
\end{equation*}
By continuity of $F$, $\dot{u}_n\to \dot{u}$ a.e. and \ref{asm:F:zero} we obtain
\begin{multline*}
\lim_{n\to\infty}\frac{F(t,u_n(t),\dot{u}_n(t)) + F(t,u_n(t),-\dot{u}(t))}{2} - F\left(t,u_n(t),\frac{\dot{u}_n(t)-\dot{u}(t)}{2}\right)
=\\=
\frac{F(t,u(t),\dot{u}(t)) + F(t,u(t),-\dot{u}(t))}{2} \text{ a.e.}
\end{multline*}
Thus, by Fatou's Lemma,
\begin{equation*}
\int_I \frac{F(t,u,\dot{u}) + F(t,u,-\dot{u})}{2} \,dt
\leq
\liminf_{n\to \infty}
\int_I
\frac{F(t,u_n,\dot{u}_n) + F(t,u_n,-\dot{u})}{2} - F\left(t,u_n,\frac{\dot{u}_n-\dot{u}}{2}\right)
\,dt
\end{equation*}
Taking into account \eqref{eq:F_t_un_dotu_converges} and \eqref{eq:F_t_un_dotun_converges} we have
\begin{equation*}
\lim_{n\to \infty}
\int_I
\frac{F(t,u_n,\dot{u}_n) + F(t,u_n,-\dot{u})}{2}
=
\int_I \frac{F(t,u,\dot{u}) + F(t,u,-\dot{u})}{2} \,dt
\end{equation*}
and consequently
\begin{equation*}
\int_I \frac{F(t,u,\dot{u}) + F(t,u,-\dot{u})}{2} \,dt
\leq
\int_I \frac{F(t,u,\dot{u}) + F(t,u,-\dot{u})}{2} \,dt
- \limsup \int_I F\left(t,u_n,\frac{\dot{u}_n-\dot{u}}{2}\right)\,dt
\end{equation*}
It follows that
\begin{equation*}
\lim_{n\to\infty} \int_I F\left(t,u_n,\frac{\dot{u}_n-\dot{u}}{2}\right)\,dt  = 0
\end{equation*}

From ellipticity condition \ref{asm:F:ellipticity} we get
\begin{equation*}
\lim_{n\to\infty} \int_I G\left(\frac{\dot{u}_n-\dot{u}}{2}\right)\,dt
\leq
\lim_{n\to\infty} \frac{1}{\Lambda} \int_I F\left(t,u_n,\frac{\dot{u}_n-\dot{u}}{2}\right)\,dt = 0
\end{equation*}
Thus $\dot{u}_n \to \dot{u}$ in $\LGspace$ by \cite[Theorem 3.13]{ChmMak17}.

\end{proof}

\subsection{Mountain Pass geometry}

Now we show that $\Jcal$ has a mountain pass geometry. It follows  immediately from \eqref{eq:J}, \ref{asm:F:zero} and \ref{asm:V:zero} that

\begin{lemma}
$\Jcal(0)=0$
\end{lemma}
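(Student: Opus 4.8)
The statement to prove is simply that $\Jcal(0) = 0$.

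The plan is to evaluate the action functional at the zero function directly from its definition \eqref{eq:J}. Substituting $u \equiv 0$ gives
\[
\Jcal(0) = \int_I F(t,0,0)\,dt + \int_I V(t,0)\,dt + \int_I \inner{f(t)}{0}\,dt.
\]
The third term vanishes trivially since the inner product with the zero vector is zero. For the first term, assumption \ref{asm:F:zero} states that $F(t,x,0) = 0$ for all $(t,x) \in I \times \R^N$; in particular $F(t,0,0) = 0$, so $\int_I F(t,0,0)\,dt = 0$. For the second term, assumption \ref{asm:V:zero} states precisely that $\int_I V(t,0)\,dt = 0$. Adding the three contributions yields $\Jcal(0) = 0$.

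There is no real obstacle here: the statement is an immediate consequence of the normalization hypotheses \ref{asm:F:zero} and \ref{asm:V:zero} together with the bilinearity of the pairing $\inner{\cdot}{\cdot}$. The only thing worth noting is that $0 \in \WLGzspace$ (it is absolutely continuous, vanishes on $\partial I$, and has $\dot 0 \equiv 0 \in \LGspace$), so that $\Jcal(0)$ is indeed well-defined as an evaluation of the functional on its domain. This verifies condition (2) of the Mountain Pass Theorem \ref{thm:mpt}.
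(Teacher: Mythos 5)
Your proof is correct and matches the paper's own argument, which simply notes that $\Jcal(0)=0$ follows immediately from the definition \eqref{eq:J} together with assumptions \ref{asm:F:zero} and \ref{asm:V:zero}. Your explicit substitution and the remark that $0\in\WLGzspace$ merely spell out the same reasoning in slightly more detail.
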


We next prove that $\Jcal$ is negative at some point outside $B_\rho(0)$, where $
\rho = \frac{\rho_0}{C_{\infty,G}}.
$

\begin{lemma}
    \label{lem:J(e)<0}
There exists $e\in \WLGzspace$ such that $\WLGnorm{e}>\rho$ and $\Jcal(e)<0$.
\end{lemma}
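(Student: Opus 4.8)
The plan is to construct $e$ by scaling a fixed admissible direction. First I would fix any $w \in \WLGzspace$ with $w \not\equiv 0$ and $|w(t)| \geq r_0$ on a subinterval $I_0 \subset I$ of positive measure (this is possible since $\WLGzspace$ functions are continuous and vanish only at the endpoints; take a bump that is large on the bulk of $I$). Then consider $e_\lambda = \lambda w$ for $\lambda > 1$ and estimate each of the three terms of $\Jcal(e_\lambda)$ from above.

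The key step is the subhomogeneity estimate from Lemma \ref{lem:subhomogeneous}. For the potential term, split $I = I_\lambda \cup (I \setminus I_\lambda)$ where $I_\lambda = \{t : |\lambda w(t)| \geq r_0\}$; for $\lambda$ large, $I_\lambda \supseteq I_0$ and on $I_\lambda$ we have $V(t, \lambda w(t)) \leq \lambda^{\theta_V} V(t, w(t))$ with $V(t,w(t)) < 0$ by \ref{asm:V:negative} on $I_0$, while on the (shrinking) complement $|\lambda w|$ stays bounded so $V(t,\lambda w)$ is bounded there. Hence $\int_I V(t,\lambda w)\,dt \leq -c\,\lambda^{\theta_V} + O(1)$ for some $c > 0$. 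For the kinetic term, \ref{asm:F:zero} gives $F(t,x,0)=0$, and from Lemma \ref{lem:subhomogeneous}(a) applied with $v = \dot w$, $x = w$, we get $\int_I F(t,\lambda w, \lambda \dot w)\,dt \leq \lambda^{\theta_F} \int_I F(t,w,\dot w)\,dt = O(\lambda^{\theta_F})$. For the linear term, $|\int_I \inner{f}{\lambda w}\,dt| \leq 2\lambda |I| \LGastnorm{f}\WLGznorm{w} = O(\lambda)$ by Hölder and Poincaré. Altogether
\begin{equation*}
\Jcal(\lambda w) \leq \lambda^{\theta_F} C - c\,\lambda^{\theta_V} + O(\lambda) + O(1).
\end{equation*}

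Since $\theta_V > \theta_F$ and $\theta_V > 1$, the negative term dominates, so $\Jcal(\lambda w) \to -\infty$ as $\lambda \to \infty$; in particular there is $\lambda_0 > 1$ with $\Jcal(\lambda_0 w) < 0$. Enlarging $\lambda_0$ further if needed, also $\WLGznorm{\lambda_0 w} = \lambda_0 \WLGznorm{w} > \rho$. Setting $e = \lambda_0 w$ finishes the proof (and $\WLGnorm{e} \geq \WLGznorm{e} > \rho$ by the equivalence of norms on $\WLGzspace$).

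The main obstacle I anticipate is the bookkeeping around the set $I_\lambda$: one must be careful that $\theta_F$ could in principle exceed $1$, so the $O(\lambda^{\theta_F})$ term is genuinely present and the argument really does need $\theta_V > \theta_F$ (not merely $\theta_V > 1$) — this is exactly why assumption \ref{asm:V:AR} requires both inequalities. A secondary subtlety is ensuring the chosen $w$ makes $\int_{I_0} V(t,w)\,dt$ strictly negative and bounded away from $0$; choosing $w$ with $|w| \geq r_0$ on a fixed positive-measure set and invoking \ref{asm:V:negative} handles this cleanly.
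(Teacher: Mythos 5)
Your proof is correct and is essentially the paper's own argument: pick $u_0$ with $|\{t\colon|u_0(t)|\geq r_0\}|>0$, scale by $\lambda$, control each term via Lemma \ref{lem:subhomogeneous}, and use $\theta_V>\max\{1,\theta_F\}$ together with \ref{asm:V:negative} to force $\Jcal(\lambda u_0)\to-\infty$. One small correction to your bookkeeping: Lemma \ref{lem:subhomogeneous} b) requires $|w(t)|\geq r_0$ (not $|\lambda w(t)|\geq r_0$), so on $I_\lambda\setminus\{|w|\geq r_0\}$ you may not write $V(t,\lambda w)\leq\lambda^{\theta_V}V(t,w)$; simply discard that piece using $V(t,\lambda w)<0$ from \ref{asm:V:negative} --- the paper's split into $\{|u_0|\geq r_0\}$ and its complement (where the contribution is bounded by $M|I|$) handles exactly this point.
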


\begin{proof}
Choose $u_0\in \WLGzspace$ such that
$
|\{ t\in I\colon |u_0(t)| \geq r_0 \}|>0.
$
Set $M=\sup\{ |V(t,x)| \colon t\in I, |x|\leq r_0\}$.
For any $\lambda > 1$ we have
\begin{multline*}
\Jcal(\lambda u_0) =
\int_I F(t,\lambda u_0,\lambda \dot{u}_0 )\,dt + \int_I V(t,\lambda u_0)\,dt +
\int_I \inner{f}{\lambda u_0}\,dt
\leq \\ \leq
\lambda^{\theta_F }\int_I F(t, u_0, \dot{u}_0 )\,dt
+ \lambda^{\theta_V} \int_{ \{|u_0(t)|\geq r_0\} } V(t, u_0)\,dt + M |I|
+ \lambda \int_I \inner{f}{u_0}\,dt
\end{multline*}
by Lemma \ref{lem:subhomogeneous}. Since $V(t,x)$ is negative for  $|x|\geq r_0$ and $\theta_V>1$, $\theta_F < \theta_V$,
\begin{equation*}
\lim_{\lambda\to \infty} \Jcal(\lambda u_0) = -\infty
\end{equation*}
Thus, choosing $\lambda_0$ large enough, we can set $e=\lambda_0u_0$.
\end{proof}

\begin{lemma}
    \label{lem:J>0}
Assume that either \ref{thmA:asm_ineq} or \ref{thmB:asm_ineq} holds. Then
\begin{equation*}
\inf_{\WLGznorm{u}=\rho} \Jcal(u)>0
\end{equation*}
\end{lemma}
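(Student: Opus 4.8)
The plan is to bound $\Jcal(u)$ from below on the sphere $\WLGznorm{u}=\rho$ by estimating each of the three terms in \eqref{eq:J} separately, using that on this sphere the sup-norm is controlled by $\rho_0$. Concretely, if $\WLGznorm{u}=\LGnorm{\dot u}=\rho=\rho_0/C_{\infty,G}$, then the embedding $\WLGspace\embed\Lpspace[\infty]$ together with the Poincar\'e inequality \eqref{ineq:Poincare} gives $\Lpnorm[\infty]{u}\le C_{\infty,G}\WLGznorm{u}=\rho_0$, so $|u(t)|\le\rho_0$ for all $t\in I$. For the potential term I would use \ref{asm:V:near_zero}: since $|u(t)|\le\rho_0$, we get $\int_I V(t,u)\,dt\ge-\int_I g(t)\,dt$. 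For the forcing term, H\"older's inequality and \eqref{ineq:Poincare} give $\int_I\inner{f}{u}\,dt\ge-2\LGastnorm{f}\LGnorm{u}\ge-2|I|\,\LGastnorm{f}\,\LGnorm{\dot u}=-2|I|\,\LGastnorm{f}\,\rho$. For the kinetic term, \ref{asm:F:ellipticity} gives $\int_I F(t,u,\dot u)\,dt\ge\Lambda\,R_G(\dot u)$.

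Combining these, on the sphere $\WLGznorm{u}=\rho$ we obtain
\begin{equation*}
\Jcal(u)\ge\Lambda\,R_G(\dot u)-\int_I g(t)\,dt-2|I|\,\LGastnorm{f}\,\rho.
\end{equation*}
It remains to bound $R_G(\dot u)$ from below in terms of $\rho=\LGnorm{\dot u}$, and this is where the two cases diverge. In the setting of Theorem \ref{thm:A}, where $\rho_0\ge C_{\infty,G}$, i.e. $\rho\ge 1$, Proposition \ref{prop:modular_greater_than_norm} gives $R_G(\dot u)\ge\LGnorm{\dot u}=\rho$, so
\begin{equation*}
\Jcal(u)\ge\Lambda\rho-\int_I g(t)\,dt-2|I|\,\LGastnorm{f}\,\rho=\bigl(\Lambda-2|I|\,\LGastnorm{f}\bigr)\rho-\int_I g(t)\,dt,
\end{equation*}
which is strictly positive precisely by assumption \eqref{thmA:asm_ineq} (after multiplying that inequality through by $\rho=\rho_0/C_{\infty,G}$). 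In the setting of Theorem \ref{thm:B}, where $G$ satisfies $\Delta_2$ and $\nabla_2$ globally, Proposition \ref{prop:modular_greater_than_norm_globally} applies: if $\rho\le 1$ then $R_G(\dot u)\ge\rho^{q_G}$, and if $\rho>1$ then $R_G(\dot u)\ge\rho^{p_G}$. Plugging in and rearranging, positivity of $\Jcal(u)$ is exactly the content of \eqref{thmB:asm_ineq}.

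The main obstacle, and the only genuinely delicate point, is getting the direction of the modular-versus-norm estimate right: we need a \emph{lower} bound on $R_G(\dot u)$ in terms of $\LGnorm{\dot u}$, and the relevant exponent ($p_G$ versus $q_G$, or just the first power) depends on whether $\LGnorm{\dot u}$ is $\le 1$ or $>1$, hence on the relative size of $\rho_0$ and $C_{\infty,G}$ — which is precisely why the hypotheses of the two theorems are split along $\rho_0\lessgtr C_{\infty,G}$. Everything else is a routine assembly of the embedding constant, Poincar\'e, H\"older, and the structural assumptions \ref{asm:F:ellipticity} and \ref{asm:V:near_zero}. Note that the mild gap between $\{|u(t)|\le\rho_0\}$ and the strict inequality needed can be absorbed since the final inequalities \eqref{thmA:asm_ineq}, \eqref{thmB:asm_ineq} are strict.
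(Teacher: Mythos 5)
Your proposal is correct and follows essentially the same route as the paper: the same three-term lower bound via the embedding, \ref{asm:F:ellipticity}, \ref{asm:V:near_zero}, H\"older and Poincar\'e, followed by the same case split using Proposition \ref{prop:modular_greater_than_norm} for \eqref{thmA:asm_ineq} and Proposition \ref{prop:modular_greater_than_norm_globally} for \eqref{thmB:asm_ineq}. Your remark on needing the lower (not upper) modular-versus-norm bound is exactly the point the paper's hypotheses are designed around.
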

\begin{proof}
Let $\WLGznorm{u} = \rho$. Then
\begin{equation*}
|u(t)|\leq C_{\infty,G} \WLGznorm{u} =\rho_0,\, \text{ for all $t\in I$}.
\end{equation*}
Using \ref{asm:F:ellipticity}, \ref{asm:V:near_zero} and H\"older's inequality we have
\begin{equation*}
\Jcal(u) \geq \Lambda \int_I G(\dot{u})\,dt - \int_I g(t)\,dt -2 \LGastnorm{f} \LGnorm{u}.
\end{equation*}

Assume that \eqref{thmA:asm_ineq} holds. Since $\rho\geq 1$, using Proposition \ref{prop:modular_greater_than_norm} and \eqref{ineq:Poincare}, we have
\begin{equation*}
\Jcal(u) \geq \Lambda \rho - \int_I g(t)\,dt - 2|I|\,\LGastnorm{f} \rho >0
\end{equation*}
by assumption \eqref{thmA:asm_ineq}.

Assume that \eqref{thmB:asm_ineq} holds. If $\rho>1$ then by Proposition \ref{prop:modular_greater_than_norm_globally}
\begin{equation*}
\Jcal(u) \geq \Lambda \rho^{p_G} - \int_I g(t)\,dt - 2|I|\,\LGastnorm{f} \rho
\end{equation*}
Similarly, if $\rho\leq 1$ then
\begin{equation*}
\Jcal(u) \geq \Lambda \rho^{q_G} - \int_I g(t)\,dt - 2|I|\,\LGastnorm{f} \rho
\end{equation*}
From \eqref{thmB:asm_ineq} it follows that in both cases $\Jcal(u)>0$.
\end{proof}


\bibliographystyle{elsarticle-num}
\bibliography{MPT_AniOp}

\end{document}